\theoremstyle{plain}
 \newtheorem{thm}{Theorem}[section]
 \newtheorem{prop}[thm]{Proposition}
 \newtheorem{lem}[thm]{Lemma}
 \newtheorem{cor}[thm]{Corollary}
\theoremstyle{definition}
 \newtheorem{exm}{Example}[section]
 \newtheorem{dfn}{Definition}[section]
  \newtheorem{rem}{Remark}[section]
 \newtheorem{nota}{Notation}[section]
 \numberwithin{equation}{section}
\title{Connected Chord Diagrams and the Combinatorics of Asymptotic Expansions}
\author[1]{Ali Assem Mahmoud$^\ast$\;\;}
\address{$\ast$ Department of Mathematics, Faculty of Science, Cairo University, Egypt; and Department of Combinatorics and Optimization, University of Waterloo, ON, Canada.}
\email{ali.mahmoud@uwaterloo.ca}
\author[2]{\;\;Karen Yeats$^\dagger$}
\address{$\dagger$ Department of Combinatorics and Optimization, University of Waterloo, ON, Canada}
\email{kayeats@uwaterloo.ca}
\thanks{KY and AM would like to thank Gerald Dunne, Michi Borinsky, and Dirk Kreimer for useful discussions.  KY is supported by an NSERC Discovery grant and by the Canada Research Chair program.  During some of this work KY was supported by a Humboldt fellowship and thanks Dirk Kreimer for hosting her Humboldt visit to Berlin. AM would like to thank KY for initiating the research in this problem and for her continuous support.}
\date{}
\begin{document}

\maketitle

\begin{abstract}
   In this article we study an asymptotic expansion for $C_n$, the number of connected chord diagrams on $n$ chords. The expansion is obtained in earlier work by means of alien derivatives applied to the generating series of connected chord diagrams; we seek a combinatorial interpretation. The main outcome presented here is a new combinatorial interpretation for entry \href{https://oeis.org/A088221}{A088221} of the OEIS.  We will show that \href{https://oeis.org/A088221}{A088221} counts pairs of connected chord diagrams (allowing empty diagrams).  This gives a combinatorial interpretation for part of the closed form of the asymptotic expansion of $C_n$.

\end{abstract}

\section{Introduction}\label{sec intro}

The concrete result of this paper is a bijection between ordered pairs of chord diagrams and certain rooted trees with a chord diagram structure at each vertex, giving a new combinatorial interpretation for entry \href{https://oeis.org/A088221}{A088221} of the OEIS.  However, this result is more than just an incidental bijection, but rather a preliminary step in a much bigger question about when we can obtain combinatorial understandings of asymptotic expansions and transseries.

We come at both the concrete result and the bigger question from two directions.  On one side we come as pure combinatorialists following the classic path of seeing divergent series as formal power series first, looking to use different kinds of formal expansions for counting, and finding beautiful and insightful bijections between classes of combinatorial objects that explain identities between their counting sequences.  On the other side we come more as mathematical physicists interested in working towards a better understanding of resurgence and instanton expansions in a way which is explicit and applicable in specific cases of interest in quantum field theory.  However, our concrete work herein is purely combinatorial, and skipping this introduction and the concluding discussion, the paper can be read without any other background.

Transseries are a kind of formal expansion allowing many more expressions than powers of $x$ as monomials.  Transseries are important both in analysis and in logic -- a sign of their fundamental importance.  For the application we will investigate it suffices to consider the special case of expansions of the form
\[
    \sum_{n,m\geq 0} c_{n,m}x^{n}(e^{-1/2x}x^{-1/2})^m
\]

An important problem in quantum field theory is to understand how (and to what extent) non-perturbative effects can be recovered from perturbative approaches.  In particular we may wish to start from the Hopf algebraic formulation of perturbative quantum field theory of Connes and Kreimer and recover non-perturbative information. This is one of the reasons one of us studies Dyson-Schwinger equations.  Recently, resurgent analysis and the theory of transseries have been brought to bear on this question \cite{BMV, DunneUnsal, BellonClavier}.  In the simple transseries form above, the $m=0$ part is the perturbative part, $m=1$ is the first instanton part, $m=2$, the second instanton part, and so on.  The instantons are non-perturbative effects.   In \cite{Geraldmichi} Borinsky and Dunne pursue a detailed case study of a particular Dyson-Schwinger equation in Yukawa theory which had been solved perturbatively and given an exact functional solution by Broadhurst and Kreimer \cite{bkerfc}.  From the Dyson-Schwinger equation Borinsky and Dunne are able to give and understand the instanton expansion of this same problem to all orders and so reveal the non-perturbative structure.

However, this story is not just Hopf algebraic and analytic, but also combinatorial.  The simple transseries form can be seen as a bivariate formal series, with the second variable $\xi = e^{-1/2x}x^{-1/2}$ suggestively named.  The perturbative solution to the Dyson-Schwinger equation studied by Broadhurst and Kreimer is essentially the generating function for rooted connected chord diagrams, something which has been explored further in \cite{yu, terminal, con, Karenmarkushihn, michi}.  The first instanton expansion, then, is just an appropriate normalization of the asymptotic expansion of number of rooted connected chord diagrams.  The coefficients of this expansion are, after taking care of some straightforward signs and denominators, also a sequence of positive integers, and through Borinsky's theory of factorially divergent power series \cite{michi}, there is an expression for this expansion in terms of the generating function for rooted connected chord diagrams.  

This situation calls out for a combinatorial interpretation.  Though we are only able to give such an interpretation for part of the resulting function, just this part yields the highly non-trivial new bijection mentioned above.  After presenting the background and details of this bijection, we will conclude the paper by a discussion of the all orders transseries solution of Borinsky and Dunne, and what an as yet unknown combinatorial understanding of it should look like.  Borinsky's theory of factorially divergent power series is also an important step towards a combinatorial interpretation, as it gives general tools for relating a formal series to the series of the asymptotic expansion of its coefficients entirely at the level of formal power series.  However, these series manipulations do not in general maintain good combinatorial interpretations throughout the process, notably because of cancellations between parts with different signs.

\section{Chord Diagrams}\label{chorddiagrams section}

(Connected) chord diagrams stand as a rich structure that becomes handy and informative in a variety of contexts, including bioinformatics \cite{bioinfo}, quantum field theory \cite{karenbook, michi1, michi}, and data structures \cite{datast}, as well as in pure combinatorics \cite{flajoletchords}. 


\begin{dfn}[Rooted chord diagrams]
A rooted chord diagram of size $n$ is a matching of the set $\{1,\ldots,2n\}$. Each pair in the matching is a \textit{chord}.

We visualize rooted chord diagrams by drawing $\{1,\ldots, 2n\}$ as vertices counterclockwise on a circle, or lined up from left to right, with the chords indicated by lines crossing the circle or arcs above the line.  The vertex $1$ is the \textit{root vertex}, and the chord involving $1$ is the \textit{root chord}.  With the root indicated, the labels $\{1,\ldots, 2n\}$ are unnecessary and so will not be drawn.
\end{dfn}

Then the generating series for rooted chord diagrams is 

\begin{equation}\label{rootedchorddiagsgen}
    D(x):=\sum^\infty_{n=0}(2n-1)!!\; x^n
\end{equation}

All chord diagrams considered here will be rooted and so, when we say a chord diagram we tacitly mean a rooted one.

The representation of a chord diagram with vertices in a line and chords as arcs above the line will be called the \textit{linear representation} of the chord diagram.
A chord in the diagram may be referred to as $c=\{a<b\}$, where $a$ and $b$ are vertices.

\begin{dfn}[Intervals]
In the linear representation of a rooted chord diagram, an $interval$ is the space to the right of one of the nodes in the linear representation. Thus, a rooted diagram on $n$ chords has $2n$ intervals.
\end{dfn}
 
Note the definition of interval includes the space to the right of the last node in the linear order. 

\begin{figure}[h!]
    \centering
    \includegraphics[scale=0.6]{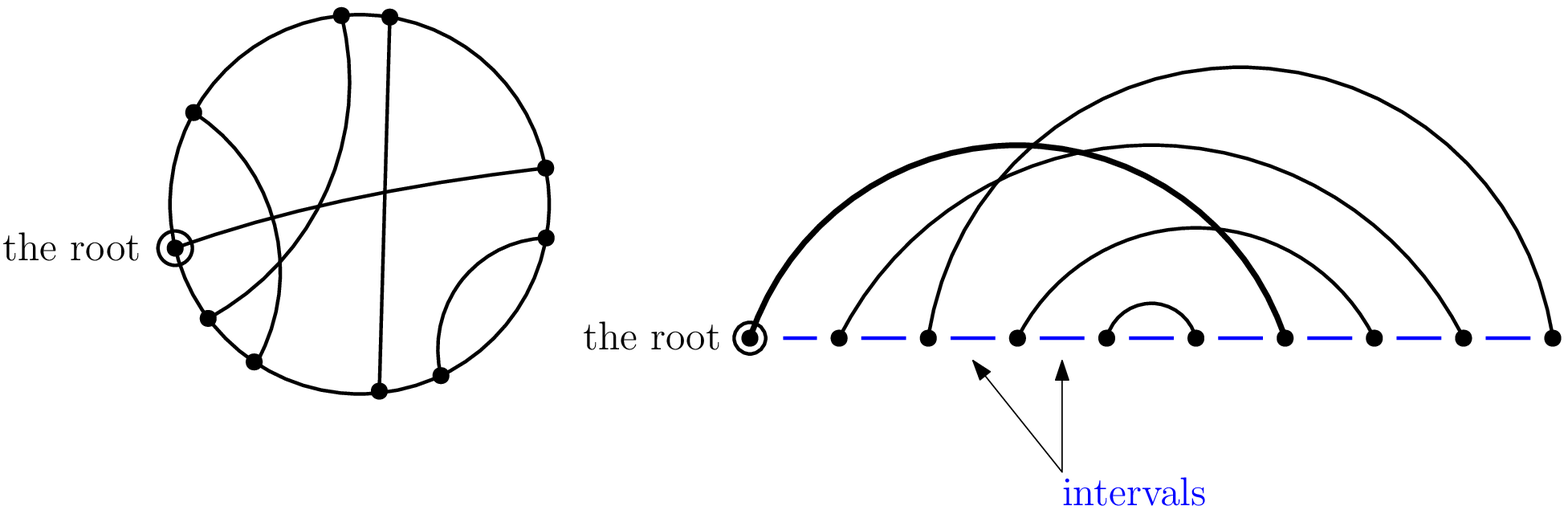}
    \raisebox{1.283cm}{\includegraphics[scale=0.6]{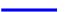}}
    \caption{A rooted chord diagram and its linear representation}
    
\end{figure}

As may be expected by now, the crossings in a chord diagram encode much of the structure and so we ought to give proper notation for them. Namely, in the linear order, two chords $c_1=\{v_1<v_2\}$ and $c_2=\{w_1<w_2\}$ are said to \textit{cross} if $v_1<w_1<v_2<w_2$ or $w_1<v_1<w_2<v_2$.
Keeping track of all the crossings in the diagram leads to the following definition:

\begin{dfn}[The Intersection Graph] 
Given a (rooted) chord diagram $D$ on $n$ chords, consider the following graph $\mathcal{G}_D$: the chords of the diagram will serve as vertices for the new graph, and there is an edge between the two vertices $c_1=\{v_1<v_2\}$ and $c_2=\{w_1<w_2\}$ if $v_1<w_1<v_2<w_2$ or $w_1<v_1<w_2<v_2$, i.e. if the chords cross each other. The graph so constructed is called the \textit{intersection graph} of the given chord diagram.  
\end{dfn}

\begin{rem}
A labelling for the intersection graph can be obtained as follows: give the label $1$ to the root chord;  order the components obtained if the root is removed according to the order of the first vertex of each of them in the linear representation, say the components are $C_1,\ldots,C_n$; and then recursively label each of the components proceeding in that order. It is easily verified that a rooted chord diagram can be uniquely recovered from its labelled intersection graph.\\
\end{rem}

\begin{dfn}[Connected Chord Diagrams]\label{c}
A (rooted) chord diagram is said to be \textit{connected} if its  intersection graph is connected in the graph-theoretic sense. A \textit{connected component} of a diagram is a subset of chords which itself forms a connected chord diagram. The term \textit{root component} will refer to the connected component containing the root chord.
\end{dfn}

\begin{exm}
The diagram $D$ below is a connected chord diagram in linear representation, where the root node is drawn in black.

\begin{center}
\includegraphics[scale=0.5]{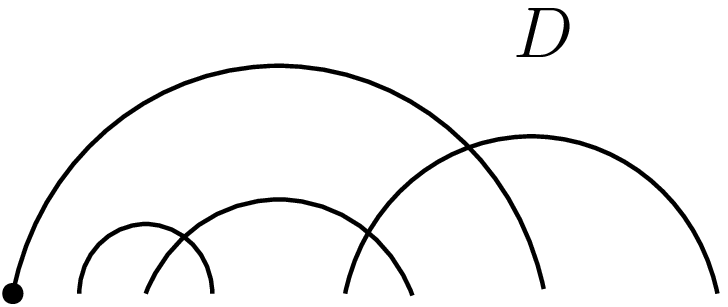}\end{center}
\end{exm}

The generating function for connected chord diagrams (in the number of chords) is denoted by $C(x)$. Thus $C(x)=\sum_{n=0}C_n x^n$, where $C_n$ is the number of connected chord diagrams on $n$ chords. The first terms of $C(x)$ are found to be 
\[C(x)=x+x^2+4x^3+27\;x^4+248\;x^5+\cdots\;;\]
the reader may refer to OEIS sequence \href{https://oeis.org/A000699}{A000699}  for more coefficients. 
The next lemma lists some classic decompositions for chord diagrams (see \cite{flajoletchords} for example), each of which can be used to obtain $C(x)$.

\begin{lem}\label{cd}
  If $D(x), C(x)$ are the generating series for chord diagrams and connected chord diagrams respectively, then 
  \begin{enumerate}
      \item[$\mathrm{(i)}$]  $D(x)=1+C(xD(x)^2)$, \label{i1}
      \item[$\mathrm{(ii)}$] $D(x)=1+xD(x)+2x^2D'(x)$, and \label{i2}
      \item[$\mathrm{(iii)}$] $2xC(x)C'(x)=C(x)(1+C(x))-x$.\label{i3}
  \end{enumerate}

  \end{lem}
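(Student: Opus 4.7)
My plan is to prove each identity in turn: (i) and (ii) by direct combinatorial decomposition, and (iii) by implicit differentiation of (i) combined with (ii). For (i), I would argue by a root-component decomposition. A nonempty chord diagram has a unique root component --- the connected component of its intersection graph containing the root chord --- and if it has $k$ chords, then viewed in isolation it is a connected chord diagram on $k$ chords with $2k$ intervals. Each non-root chord of the whole diagram must lie entirely within one of these intervals (otherwise it would cross a chord of the root component and therefore belong to it), and within each interval the non-root chords form an independent arbitrary sub-diagram. Summing over fillings of the $2k$ intervals produces a factor $D(x)^{2k}$, and summing over $k\ge 1$ gives $C(xD(x)^2)$; including the empty diagram yields (i).

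For (ii), extracting coefficients reveals the identity is equivalent to $(2n+1)!!=(1+2n)(2n-1)!!$, to which I would give the following bijective meaning. A chord diagram on $n+1$ chords is obtained from one on $n$ chords by prepending a new root chord whose first endpoint becomes the new vertex $1$; its second endpoint is then placed in one of $2n+1$ possible slots. Placing it immediately after the first endpoint leaves the old diagram intact and accounts for the generating function $xD(x)$. The remaining $2n$ placements put the second endpoint immediately after an existing vertex, which is equivalent to the data of a chord diagram with a marked vertex; since $xD'(x)$ enumerates chord-marked diagrams and a factor of $2$ selects one of the two endpoints of the marked chord, this accounts for $2x\cdot xD'(x)=2x^2D'(x)$.

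For (iii), the plan is to differentiate (i) implicitly and then use (ii) to eliminate $D'$. Set $w := xD(x)^2$, so (i) reads $C(w) = D-1$; applying $d/dx$ and the chain rule yields $C'(w)(D^2+2xDD')=D'$. Computing both sides of the claimed identity in terms of $D, D', x$ gives
\[
    2w\,C(w)\,C'(w) \;=\; \frac{2xD(D-1)D'}{D+2xD'}, \qquad C(w)(1+C(w)) - w \;=\; D(D-1-xD).
\]
Using (ii) in the forms $D-1-xD=2x^2D'$ and $D-1=xD+2x^2D'$, both sides simplify to $2x^2DD'+4x^3(D')^2$, establishing the identity under the substitution $w=xD(x)^2$. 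I expect the final subtle step to be the main obstacle: to promote this substitution-identity into an identity in the indeterminate $w$, one must note that $w = x + O(x^2)$ has nonzero linear term, so the corresponding substitution defines an injective continuous endomorphism of formal power series; hence the identity lifts to a formal identity in $w$, and relabeling $w$ as $x$ gives (iii).
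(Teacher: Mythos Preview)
Your proofs of (i) and (ii) are correct and are essentially the same decompositions the paper uses: root-component extraction for (i), and a case analysis on where the right end of the root chord lands for (ii).

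For (iii) the routes diverge. You derive it algebraically from (i) and (ii) by implicit differentiation and then transfer the identity back along the invertible substitution $w=xD(x)^2$; this is valid, and the paper explicitly notes that (iii) ``can be derived from (i) and (ii)'' without carrying out the computation. The paper instead gives a direct combinatorial proof: removing the root chord from a connected diagram leaves an ordered sequence of connected components, each carrying a distinguished interval (the one the deleted root passed through) which cannot be the final interval; since a connected diagram on $m$ chords has $2m$ intervals, the generating function for such a marked component is $2xC'(x)-C(x)$, and one obtains $C(x)=x/\bigl(1-(2xC'(x)-C(x))\bigr)$, which rearranges to (iii). Your approach has the virtue of showing concretely how (iii) is forced by (i) and (ii); the paper's approach has the virtue of exhibiting the combinatorial meaning of each term in (iii) directly.

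One small correction: in your simplification you claim both sides reduce to $2x^2DD'+4x^3(D')^2$, but in fact both reduce to $2x^2DD'$ (the extra factor you wrote would appear only after cross-multiplying by $D+2xD'$, and even then it should be $2x^2D^2D'+4x^3D(D')^2$). This is a harmless arithmetic slip; the two expressions do agree, so the argument goes through.
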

  
  \begin{proof} We sketch the underlying decompositions as follows: 
  \begin{enumerate}
    \item[$\mathrm{(i)}$] The `one' term is for the empty chord diagram. Now, given a nonempty chord diagram, we see that for every chord in the root component there live two (potentially empty) chord diagrams to the right of its two ends. This gives the desired decomposition.
     
     \item[$\mathrm{(ii)}$] There are three situations for a root chord: it is either non-existent in the case of the empty diagram, or it is concatenated with a following diagram, or the root chord has its right end landing in one of the intervals of a diagram. These situations correspond respectively with the terms in (ii).
     
   \item[$\mathrm{(iii)}$] Can be derived from (i) and (ii). Nevertheless, it can be also shown as follows: if we remove the root chord what is left is a sequence of connected components, with each component having a special interval (through which the root used to pass) which cannot be the last interval (see the figure below). Thus each of these components is counted according to the generating function $2xC'(x)-C(x)$.
   \begin{center}
   \includegraphics[scale=0.65]{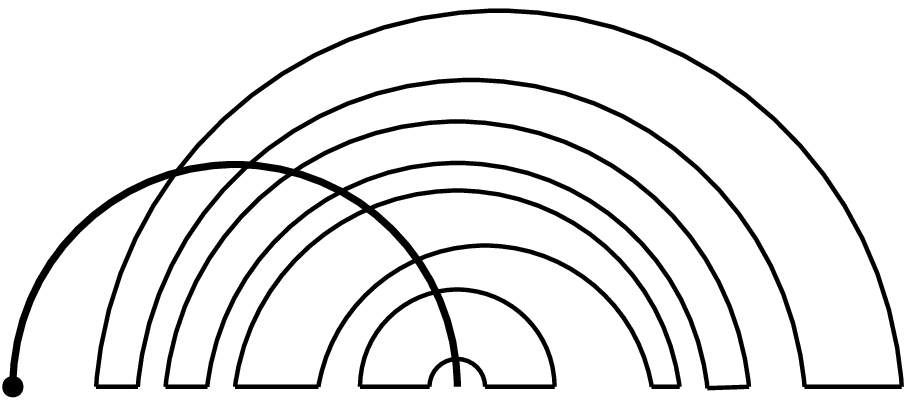}   
   \end{center}
   
   This decomposition gives that
   \[C(x)=\displaystyle\frac{x}{1-(2xC'(x)-C(x))},\]
   and the result follows.
     \end{enumerate}
  \end{proof}

  \setlength{\parindent}{0cm}
  
    We end this section with the definition of an \textit{indecomposable} chord diagram, these diagrams will become a key ingredient later on. 
    \begin{dfn}\label{indecompo}
    A chord diagram is said to be \textit{indecomposable} if, when represented linearly, it is not the concatenation of disjoint nonempty chord diagrams. The empty diagram is vacuously indecomposable by definition. The generating function for indecomposable chord diagrams is denoted here by $I(x)$. We shall also use $I_0(x)$ to denote the generating function for nonempty indecomposable chord diagrams (that is $I(x)=1+I_0(x)$).
    \end{dfn}
    
 \begin{exm}

 Consider the following two diagrams:
 
$D_1=\;$ \raisebox{-0.1cm}{\includegraphics[scale=0.56]{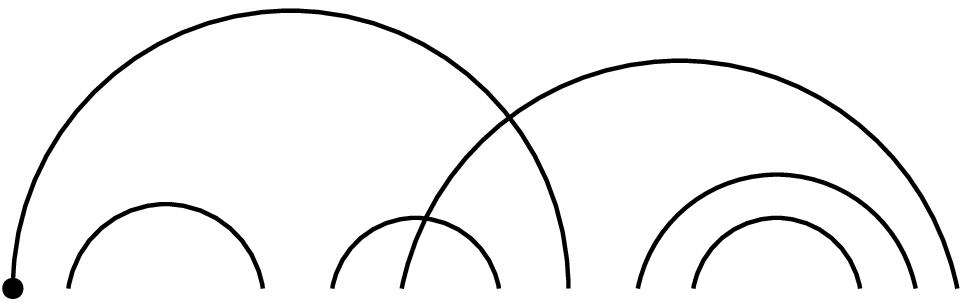}}\;, \;and \;$D_2=\;$ \raisebox{-0.1cm}{\includegraphics[scale=0.54]{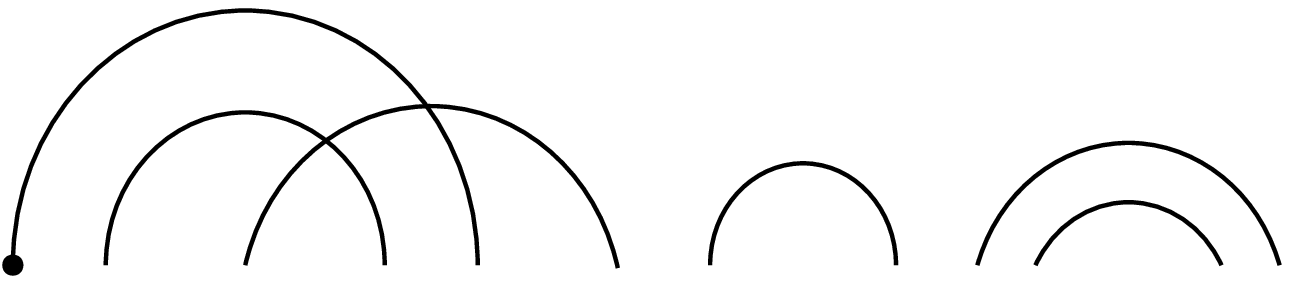}}

\quad Then $D_1$ is indecomposable, whereas $D_2$ is not since it is the concatenation of three indecomposable chord diagrams. Notice that an indecomposable chord diagram is not necessarily connected, $D_1$ provides an example, but the converse is clearly true, namely, any connected diagram is indecomposable. 
 
 \end{exm}
Sequence \href{https://oeis.org/A000698}{A000698} of the OEIS counts indecomposable chord diagrams, the first terms start as 
$$I(x)=1+x+ 2x^2+ 10x^3+74 x^4+ 706x^5+\cdots,$$ where $I(x)$ is the generating series for indecomposable chord diagrams.

\section{Asymptotics of $C_n$}\label{asymptotics o connected}

We pursue a combinatorial interpretation for expressions that appear in the asymptotic expansion of $C_n$, the number of connected chord diagrams on $n$ chords. 
This asymptotic expansion can be expressed as a rational function of $C(x)$ times an exponential function in $C(x)$. One of us, looking at initial terms, conjectured in 2018 that the part in the exponent is following sequence \href{https://oeis.org/A088221}{A088221}. The main result presented in Section \ref{mainhere}  proves this and gives a new combinatorial interpretation for entry  \href{https://oeis.org/A088221}{A088221} of the OEIS.  We will show that \href{https://oeis.org/A088221}{A088221}, surprisingly, counts pairs of connected chord diagrams (allowing empty diagrams). 

\bigskip

In \cite{michi1}, M. Borinsky studied the asymptotic behaviour of $C_n$, the number of connected chord diagrams on $n$ chords, as an instance of his work on factorially divergent power series.

First we need a few definitions from from his work.
In \cite{michi, michi1}, M. Borinsky  studied sequences $a_n$ whose asymptotic behaviour for large $n$ follows a relation like
\begin{equation}
     a_n=\alpha^{n+\beta} \Gamma(n+\beta)\bigg(c_0+\displaystyle\frac{c_1}{\alpha(n+\beta-1)}+\displaystyle\frac{c_2}{\alpha^2(n+\beta-1)(n+\beta-2)}+\cdots\bigg),
\end{equation}
where $\alpha\in\mathbb{R}_{>0}$, and $\beta, c_k \in \mathbb{R}$, and 
where $\Gamma(z)=\int_0^\infty x^{z-1}e^{-x}dx$ for $\text{Re}(z)>0$ is the gamma function.

When $a_n$ has such an asymptotic expansion, then the formal power series $f(x) = \sum_{n\geq 0}a_nx^n$ is, following Borinsky, said to be a \emph{factorially divergent power series} and the set of all such is written $\mathbb{R}[[x]]^\alpha_\beta$.  For $\alpha>0$,the $\mathbb{R}[[x]]^\alpha_\beta$ are rings, and the coefficients $c_k$ as given above are well-defined, and so we write $c_k^f$ to emphasize the dependence on $f$ and define

\begin{dfn}[\cite{michi1}]\label{map}
 For $\alpha,\beta\in \mathbb{R}$, with $\alpha>0$, let $ \mathcal{A}_\beta^\alpha:\mathbb{R}[[x]]_\beta^\alpha\rightarrow\mathbb{R}[[x]]$ be the map that has the following action for every $f\in\mathbb{R}[[x]]_\beta^\alpha$
 \[(\mathcal{A}_\beta^\alpha f)(x)=\overset{\infty}{\underset{k=0}{\sum}}c_k^fx^k.\]
\end{dfn}
A map of this type is called an \textit{alien derivative (operator)}  in the context of resurgence theory \cite{resurgence}.

\bigskip

Applying this specifically to the generating series $C(x)$ of rooted connected chord diagrams, Borinsky (\cite{michi}, section 4.6.1) shows that
\begin{align}\label{A}
    \Big(\mathcal{A}_{\frac{1}{2}}^2C\Big)(x)&=\displaystyle
    \frac{1+C(x)-2xC'(x)}{\sqrt{2\pi}}\;e^{-\frac{1}{2x}(2C(x)+C(x)^2)}\\
    &=\frac{x}{\sqrt{2\pi}C(x)}\;e^{-\frac{1}{2x}(2C(x)+C(x)^2)}\;, \tag{$\dagger$}
\end{align}
where the second equality is achieved by appealing to (iii) in Lemma \ref{cd}.
Obtaining such a computable formula for $\mathcal{A}_{\frac{1}{2}}^2C$ means that we have all the coefficients for the asymptotic expansion of $C(x)$. As provided in \cite{michi1}, the first coefficients are 
\begin{equation}\label{calc}
   \Big(\mathcal{A}_{\frac{1}{2}}^2C\Big)(x)=\frac{1}{e\;\sqrt{2\pi}}
   \Bigg(1-\frac{5}{2}x-\frac{43}{8}x^2-\frac{579}{16}x^3-\frac{44477}{128}x^4-\frac{5326191}{1280}x^5\cdots\Bigg).
\end{equation}

Translating back to $C(x)$ itself, for large $n$ this implies
\[C_n= e^{-1}\Big((2n-1)!!-\frac{5}{2}(2n-3)!!-\frac{43}{8}(2n-5)!!-\frac{579}{16}(2n-7)!!
\cdots\Big).\]
This result by M. Borinsky provides a full generalization for the computations in the work of Kleitman  \cite{kleit}, Stein and Everett \cite{steinandeveret} and Bender and Richmond \cite{benderandrichmond}, where only the first term in the expansion has been known. Finally, this also tells us that the probability for a diagram on $n$ chords to be connected is 
$e^{-1}(1-\frac{5}{4n})+\mathcal{O}(1/n^2)$.

\bigskip

Now our goal is to give combinatorial interpretations for as much as possible of the expression
\begin{equation}\label{exxx}
    \frac{x}{C(x)} \exp\Big(-\frac{1}{2x}(2C(x)+C(x)^2)\Big).
\end{equation}
By (iii) in Lemma \ref{cd}  we can rewrite the expression inside the exponential as $-1$ times
\begin{equation}\label{eq part}
1+\frac{1}{2x} C(x) (4x\frac{d}{dx}-1)C(x).
\end{equation}
Ignoring the 1 and the 1/2, this can be interpreted as the generating function for rooted chord diagrams with at most two connected components, counted by one less than the number of chords. Indeed, a $2x\displaystyle\frac{d}{dx}$ means distinguishing an interval.  Now, except for the last one, there are two ways of using an interval: we can just place the other $C(x)$ in the interval, or we can place it and pull its root chord to the very front to become the new root. The last interval can only be used in the first way. The coefficients of the expression in \eqref{eq part}, after ignoring the 1 and the 1/2, start as $3,10,63,558,6226,82836,\ldots$, which coincide with those of the sequence \href{https://oeis.org/A088221}{A088221} of the OEIS: $1,2,3,10,63,558,6226,82836,\ldots$. The only definition available for the latter is in terms of another sequence: \href{https://oeis.org/A000698}{A000698} which interestingly counts indecomposable chord diagrams. Namely, the definition tells that (in abuse of notation!) $[x^n](\href{https://oeis.org/A088221}{A088221})^n=[x^{n+1}] I(x)$.
  So, there has to be some bridge between chord diagrams with at most two connected components and indecomposable chord diagrams,  as we shall here prove. 
  
As mentioned earlier, the problem of finding a better combinatorial interpretation for \href{https://oeis.org/A088221}{A088221} lies as a piece in a more general context. We would like to more generally give combinatorial interpretations for  the action described by the map $\mathcal{A}_\beta^\alpha:\mathbb{R}[[x]]_\beta^\alpha\rightarrow\mathbb{R}[[x]]$, as it is applied in the case study of Borinski and Dunne \cite{Geraldmichi} or more generally. 

\bigskip

Throughout the paper we shall stick to the following notation:
\begin{enumerate}
    \item $\mathcal{D}$ is the class of chord diagrams,
    \item $\mathcal{C}$ is the class of connected chord diagrams,
    \item $\mathcal{C}^*$ is the class of connected chord diagrams excluding the one chord diagram,
    \item $\mathcal{D}_{\leq2}$ is the class of chord diagrams with at most two connected components,
    \item $\mathcal{I}$ is the class indecomposable chord diagrams, and finally
    \item $\mathcal{I}_2$  will stand for indecomposable chord diagrams with exactly two components.\\
\end{enumerate}

\section{The Main Bijection}\label{mainhere}
\setlength{\parindent}{0.6cm}

In this section we derive a bijection described by a reversible algorithm to move
 between the class of two lists of indecomposable chord diagrams (allowing empty lists) and the class of rooted trees, in which vertices are of special type, and where a $\mathcal{D}_{\leq2}$-structure is set over the children of every vertex. Our goal by proving this is to prove that \href{https://oeis.org/A088221}{A088221} counts pairs of connected chord diagrams. It may be possible to get the same result algebraically, but we are interested in the combinatorial argument represented mainly in the bijection displayed below.

First we recall the decomposition of a chord diagram by means of extracting the root component (Lemma \ref{cd}-(i))\;:
\[D(x)=1+C(xD(x)^2).\]
This decomposition will be of great help in the construction presented here, hence it may be wise to accompany it with a suitable notation.\\
  
 \begin{nota}\label{notationdangling} The two diagrams that correspond to each chord in the root component will be referred to as the \textit{right dangling} and the \textit{left dangling} diagrams. Given a chord diagram $D$, the root component will be denoted $C_\bullet(D)$, while the dangling diagrams will then be $d_r$ and $d_l$. The symbols $d_r,d_l$ and $C_\bullet$ will often be used as operators.\end{nota}
   
  \begin{exm}\label{exampledangling}
   Consider the following chord diagram.
   \begin{center}
   \raisebox{0cm}{\includegraphics[scale=0.68]{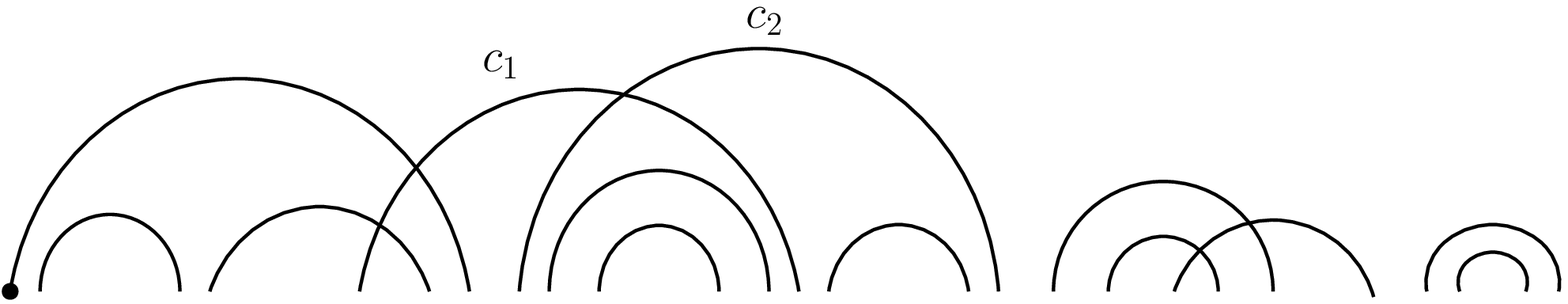}}
   \end{center}

 Let us draw it in a non-standard way
    \begin{center}
   \raisebox{0cm}{\includegraphics[scale=0.68]{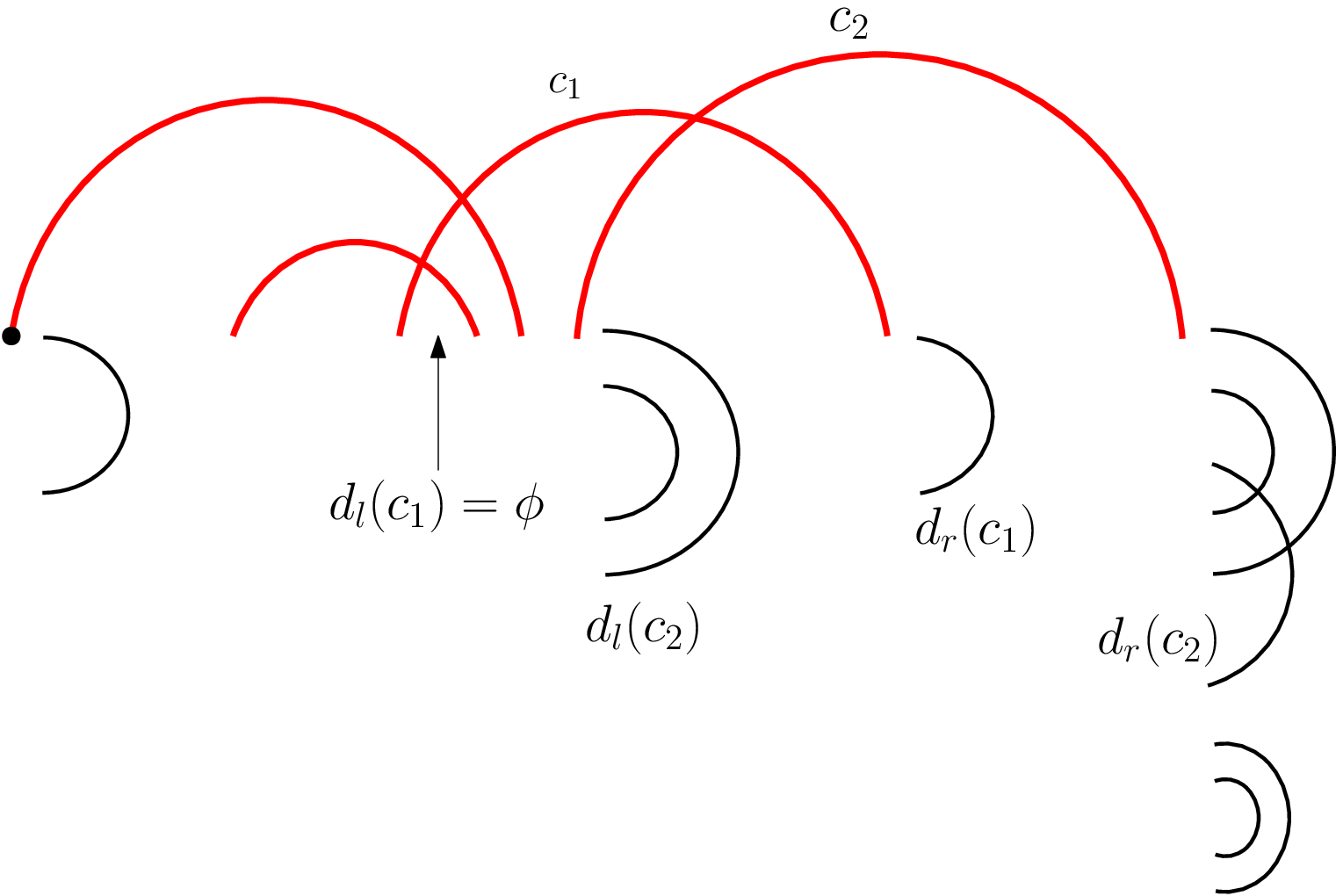}}    
   \end{center}

    which clarifies the decomposition of the lemma. Note that the thick red diagram is the root component $C_\bullet$ of the original diagram. Also, notice that, for example, $d_l(c_1)=\varnothing$. The reason for the nomenclature is now hopefully justified.\\
  \end{exm}

\begin{lem}\label{bij}There is a bijection $\Phi$ between    the class $\mathcal{C}^*$ of rooted connected chord diagrams excluding the one chord diagram, and the class $\mathcal{I}_2$ of indecomposable chord diagrams with exactly two components. Thus, in terms of generating functions
    $\mathcal{I}_2(x)=C(x)-x$.
    \end{lem}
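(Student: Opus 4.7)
The plan is first to determine the structure of any $I \in \mathcal{I}_2$, then to identify $\mathcal{I}_2$ with a product of connected-diagram classes, and finally to apply Lemma~\ref{cd}(iii) to reduce the result to $C(x) - x$. The bijection $\Phi$ itself is then built by chaining this identification with the sequence decomposition of $\mathcal{C}^*$ underlying the proof of Lemma~\ref{cd}(iii).

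I would start with the structural claim: for any $I \in \mathcal{I}_2$, the component containing vertex $1$ (call it Out) also contains vertex $2n$, and the other component (call it In) occupies a contiguous block of positions in the linear representation. The contiguity is proved by contradiction: suppose some Out vertex $q$ lies strictly between In's leftmost position $L$ and rightmost position $R$. Since In is connected, some In chord $(l, r)$ straddles $q$; the non-crossing of the two components then forces the Out chord at $q$ to be nested inside $(l, r)$, so both of its endpoints lie in $(L, R)$. Applying the same argument at every Out vertex in $(L, R)$, any Out chord with one endpoint in $(L, R)$ has both endpoints there. Classify Out's chords as \emph{middle} (both endpoints in $(L, R)$) or \emph{non-middle} (both outside); middle chords are always either disjoint from or strictly nested inside each non-middle chord, and so never cross a non-middle chord. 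Since Out contains both a middle chord (at $q$) and a non-middle chord (at vertex $1$), its intersection graph is disconnected --- a contradiction. That $2n \in$ Out follows similarly from indecomposability by examining the cut just to the left of $L$: any Out chord spanning that cut must reach beyond $R$, because an endpoint landing in $(L, R)$ is ruled out by the preceding analysis.

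Given the structure, every $I \in \mathcal{I}_2$ decomposes uniquely as a triple $(A, B, i)$, where $A$ is the outer connected diagram, $B$ is the inner connected diagram (placed contiguously), and $i$ is the interval of $A$ into which $B$ is inserted; necessarily $i$ is not the last interval of $A$. Conversely, any such triple yields a valid $\mathcal{I}_2$-diagram, so
\[
\mathcal{I}_2(x) \;=\; C(x)\bigl(2xC'(x) - C(x)\bigr),
\]
since $2xC'(x) - C(x)$ is the generating series for connected diagrams with a marked non-last interval. Rearranging Lemma~\ref{cd}(iii), $2xCC' = C(1+C) - x$, gives $C(2xC' - C) = 2xCC' - C^2 = C - x$, so $\mathcal{I}_2(x) = C(x) - x$. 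To produce the explicit bijection $\Phi: \mathcal{C}^* \to \mathcal{I}_2$, I would chain the above with the sequence decomposition from the proof of Lemma~\ref{cd}(iii): given $D \in \mathcal{C}^*$, write $D$ as its root chord together with a nonempty sequence of (connected component, non-last marked interval) pairs; the first such pair becomes $(A, i)$, and $B$ is the connected diagram whose root-removal decomposition is the remaining pairs. The main obstacle is the contiguity claim, where one must carefully rule out Out chords that bridge $(L, R)$ with its complement; once that structural fact is secured, the remaining bijection assembly and the algebraic step from Lemma~\ref{cd}(iii) are both routine.
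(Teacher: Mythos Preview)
Your argument is correct, and the bijection $\Phi$ you end up constructing is in fact the same map as the paper's: with your notation, $A=A_1=C_2$ is the outer component, $B$ is exactly $C_1=C\setminus C_2$ (the original root chord together with the remaining components $A_2,\ldots,A_k$ threaded through it), and your interval $i=i_1$ is precisely ``the interval of $C_2$ where the last end of the original root would be if the non-root chords of $C_1$ are removed.'' So at the level of the map itself there is no difference.

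Where you genuinely diverge is in the route. The paper simply writes down the root-share variant and asserts reversibility, implicitly taking for granted that every $I\in\mathcal{I}_2$ has an ``outer'' and an ``inner'' component with the inner one sitting in a single interval of the outer one. You instead prove that structural fact first (the contiguity argument for In, and that $2n$ lies in Out), then read off the parametrization $\mathcal{I}_2\leftrightarrow(A,B,i)$, obtain $\mathcal{I}_2(x)=C(x)\bigl(2xC'(x)-C(x)\bigr)$, and only then invoke Lemma~\ref{cd}(iii) to collapse this to $C(x)-x$ before assembling the explicit bijection by chaining with the sequence decomposition. This is a more circuitous path, but it buys you something the paper skips: an honest proof that the inner component of an $\mathcal{I}_2$ diagram occupies a contiguous block, which is what makes the inverse of $\Phi$ well defined in the first place. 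The paper's approach is shorter because it trusts the reader to see this; yours makes the structure of $\mathcal{I}_2$ explicit and gets the generating-function identity essentially for free along the way.
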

\begin{proof}
The bijection $\Phi$ defined here works almost the same as what is known as the {\it{root share composition}}: Let $C$ be a rooted connected chord diagram. Removing the root chord shall generally leave us with a list of rooted connected components ordered in terms of intersections with the original root. The first  of these components is denoted as $C_2$, while $C_1$ is obtained by removing $C_2$ from the original diagram $C$. Then $(k,C_1,C_2)$ where $1\leq k\leq 2|C_2|-1$, is the {\it{root share decomposition}}  of $C$ (see \cite{yu}). $\Phi$ is a small variant on this.  $\Phi$ places the whole $C_1$ at the interval of $C_2$ where the last end of the original root of $C$ would be if the non-root chords of $C_1$ are removed. Thus, the image $\Phi(C)$ is an indecomposable  chord diagram with exactly two components. This definition is reversible. Indeed, given an indecomposable chord diagram with exactly two connected components, $C_2$ will be the outer component and $C_1$ should be the inner one, and $C$ is obtained by just pulling out the first end of the root of $C_1$ to the leftmost position.
\end{proof}

\begin{exm}
Under the map $\Phi$, the chord diagram
\begin{center}
 \raisebox{-0.1cm}{\includegraphics[scale=0.5]{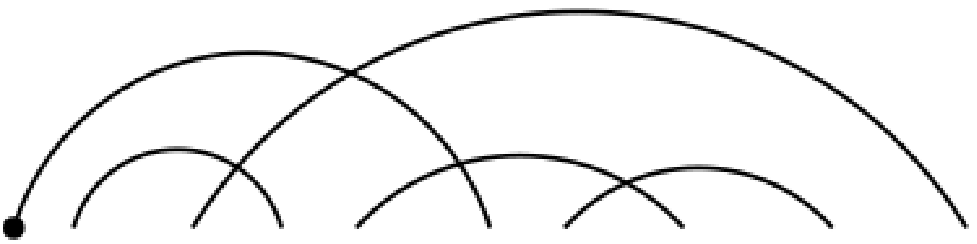}}
 \;\;\text{is mapped to}\;\;
 \raisebox{-0.1cm}{\includegraphics[scale=0.5]{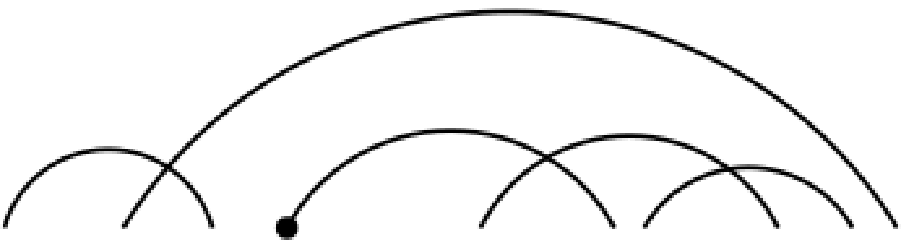}}\;,
\end{center}
\end{exm}

\setlength{\parindent}{0cm}
where, of course, the original root (black) is no longer the root for the resulting diagram.\\

\begin{nota}
In the next theorem, given a finite set $S$ and a class $\mathcal{G}$ of combinatorial objects, the term $\mathcal{G}$-structure on $S$ will mean an arrangement of the elements of $S$ into an object from $\mathcal{G}$. The operation $\ast$ stands for the usual ordered product for combinatorial classes. For example, if $\mathcal{T}$ is the class of trees and $\mathcal{K}_{or}$ is the class of oriented complete graphs, then an element from the class $\mathcal{T}\ast\mathcal{K}_{or}$ will be an ordered pair $(T,K)$ where $T\in\mathcal{T}$ and $K\in\mathcal{K}_{or}$. Notice that, for such a product structure to be applied on a finite set there has to be a partition of the set.

Concretely, we only need to consider the labelled situation.  That is, the underlying atoms of an object $a$ will be labelled by $\{1,\ldots, |a|\}$, and an element of $\mathcal{A}\ast \mathcal{B}$ will be an ordered pair $(a,b)$ but where the labels of $a$ and $b$ together run over $\{1,\ldots, |a|+|b|\}$, and so in particular given labelled objects $a$ and $b$, there are $\binom{|a|+|b|}{|a|}$ ways to combine $a$ and $b$ into an element of $\mathcal{A}\ast \mathcal{B}$.  This is the usual labelled product and is the special case of the more general construction where the sets that the classes are applied on are always $\{1,\ldots, n\}$ with $n$ the size of the object. In this notation, $\mathcal{X}$ stands for the (class of) single object of size 1. 

In this labelled context we work with exponential generating functions, that is if $\mathcal{A}$ is a class of labelled structures then the exponential generating function is $A(x) = \sum_{a\in \mathcal{A}} x^{|a|}/|a|!$.

The reader unfamiliar with this notation from enumerative combinatorics can refer to \cite{goulden}.

\end{nota}

       Note that for any class of chord diagrams the (ordinary) generating function of the class of diagrams and the exponential generating function of the labelled diagrams obtained from all labellings of the elements of the original class is the same since all $n!$ labellings of a chord diagram on $n$ chords are distinguishable.  Because of this, we can move to labelled chord diagrams in the following theorem and algorithm without changing the formal series which we obtain.  The class of rooted trees we consider $\mathcal{Z}$, on the other hand, is labelled and $Z(x)$ is its exponential generating function.

    \begin{thm}\label{mainth}Let $\mathcal{Z}$ be the class of rooted trees where vertices are nonempty ordered sets and where there is a $\mathcal{D}_{\leq2}$-structure over the  children of every vertex. Then there is a bijection $\Theta$ between $\mathcal{Z}$ and the class $\mathcal{X\ast(D\ast D)}$, where $\mathcal{D}$ is the class of chord diagrams. Consequently, if $Z(x)$ is the generating series for $\mathcal{Z}$, then 
    \begin{equation}
        Z=x\big(\displaystyle\frac{1}{1-I_0}\big)^2,
    \end{equation} where $I_0$ is the generating series for nonempty indecomposable chord diagrams.\end{thm}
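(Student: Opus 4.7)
The plan is to reduce the claim to a check on generating series (using Lemma \ref{cd}(i) and Lemma \ref{bij}), and then to describe the bijection $\Theta$ by a recursive algorithm. First I would translate the recursive structure of $\mathcal{Z}$ into a functional equation: a $\mathcal{Z}$-tree consists of a root vertex (a nonempty ordered set of labels, with EGF $x/(1-x)$) together with a $\mathcal{D}_{\leq 2}$-arrangement of children, each of which is itself a $\mathcal{Z}$-tree. The labelled substitution rule then gives
\[
Z(x) = \frac{x}{1-x}\, D_{\leq 2}(Z(x)),
\]
where $D_{\leq 2}(y)$ denotes the EGF for chord diagrams with at most two components. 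To put $D_{\leq 2}$ in closed form, I would apply the indecomposable decomposition: every $\mathcal{D}_{\leq 2}$-diagram is either empty, a single nonempty indecomposable with at most two components, or a concatenation of two nonempty connected diagrams. Using Lemma \ref{bij} (which gives $\mathcal{I}_2(y) = C(y)-y$),
\[
D_{\leq 2}(y) = 1 + \bigl(C(y) + (C(y)-y)\bigr) + C(y)^2 = (1+C(y))^2 - y.
\]

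Next I would verify that $xD(x)^2$ satisfies the same equation. By Lemma \ref{cd}(i), $1+C(xD^2) = D$, so $(1+C(xD^2))^2 = D^2$ and hence $D_{\leq 2}(xD^2) = D^2 - xD^2 = (1-x)D^2$. Substituting,
\[
\frac{x}{1-x}\, D_{\leq 2}(xD^2) = \frac{x}{1-x}\,(1-x)D^2 = xD^2,
\]
as desired. Both $Z$ and $xD^2$ lie in $x\,\mathbb{R}[[x]]$ with leading coefficient $1$ (there is a unique object of size $1$ on each side: the singleton tree $\{a\}$ versus the triple $(a,\varnothing,\varnothing)$). Rewriting the functional equation as $Z = x\bigl(Z + D_{\leq 2}(Z)\bigr)$, where the series in the parentheses has constant term $D_{\leq 2}(0) = 1$, coefficient-by-coefficient extraction yields uniqueness of solution in $x\,\mathbb{R}[[x]]$, so $Z = xD^2$. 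Combined with $D = 1/(1-I_0)$ from Definition \ref{indecompo}, this gives $Z = x\bigl(1/(1-I_0)\bigr)^2$.

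To construct the bijection $\Theta$ itself, I would proceed recursively. On the base case $(a,\varnothing,\varnothing)$, output the singleton tree with root $\{a\}$. In general, apply Lemma \ref{cd}(i) to strip off the root components $C_\bullet(D_1)$ and $C_\bullet(D_2)$ together with their dangling pairs (Notation \ref{notationdangling}); the chords of these two root components jointly furnish a $\mathcal{D}_{\leq 2}$-arrangement on the children of the root vertex, and each chord-child is processed recursively from its own dangling pair plus a distinguished atom. Lemma \ref{bij}'s bijection $\Phi$ should be invoked wherever two nonempty connected root components must be fused into an indecomposable $2$-component children-structure rather than merely concatenated. The hardest part will be making these choices canonical so that $\Theta$ is reversible: at each recursive step one must decide consistently when to absorb an atom into the current root vertex's ordered set versus sending it down to a child, so that the inverse procedure can unambiguously peel the tree back into the original triple $(a, D_1, D_2)$.
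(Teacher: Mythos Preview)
Your generating-series argument is clean and correct, and it differs from the paper's route. The paper does not verify $Z=xD^2$ at the level of series; it first constructs the bijection $\Theta$ by an explicit algorithm and only then reads off $Z=x\bigl(1/(1-I_0)\bigr)^2$ as an immediate consequence. Your approach---deriving $Z=xB(Z)$ with $B(t)=D_{\leq 2}(t)+t$, computing $D_{\leq 2}(y)=(1+C(y))^2-y$ from Lemma~\ref{bij}, and then checking via Lemma~\ref{cd}(i) that $xD(x)^2$ solves the same recursion---is a more direct way to get the series identity. (The paper does eventually write $Z=xB(Z)$, but only later, in the proof of Corollary~\ref{coro}.)

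Where your proposal has a genuine gap is the bijection itself, which the theorem asks for by name. Your sketch leaves the decisive case analysis unresolved and in one place misidentifies the role of $\Phi$. In the paper's algorithm the four shapes a $\mathcal{D}_{\leq 2}$-structure on the children of a vertex can take---connected, concatenation of two connected, indecomposable with two components, and empty---are matched to four mutually exclusive configurations of the dangling pair $(D_l,D_r)$ at that chord: respectively $D_l=\varnothing,\,D_r\neq\varnothing$; both nonempty; $D_l\neq\varnothing,\,D_r=\varnothing$ with $C_\bullet(D_l)$ \emph{not} a single chord; and both empty. The map $\Phi$ is applied only in the third case, to turn the single connected diagram $C_\bullet(D_l)$ into an indecomposable two-component structure; it is \emph{not} used to ``fuse two nonempty connected root components,'' since that situation is handled by straight concatenation. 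The one remaining configuration---$D_l\neq\varnothing,\,D_r=\varnothing$ with $C_\bullet(D_l)$ a single chord---is exactly what triggers absorption of an extra node into the current vertex, and this is the mechanism by which vertices grow into nontrivial ordered sets. Without this five-way split the construction neither surjects onto $\mathcal{Z}$ nor inverts, so your recursive description as written does not yet define a map.
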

    
    We will think of the nonempty ordered sets forming the vertices of the trees as little paths inside the vertex, see Figure~\ref{thetaofP}
    
    \begin{proof}
     Begin with a labelled object $P$ from the class $\mathcal{X\ast(D\ast D)}$. In abuse of notation we shall write $d_r(P)$ and $d_l(P)$ for the two diagrams involved. Also \textit{the chord of} $P$ will mean the part of $P$ coming from $\mathcal{X}$ in the decomposition. Then the corresponding $\mathcal{Z}$-tree is obtained through the following algorithm (in all cases the label of a chord becomes the label of the vertex constructed from that chord): \\
    \noindent\rule{\textwidth}{0.4pt}
    \textbf{Algorithm 1\label{alg}: Make $\mathbf{\mathcal{Z}}$-Tree}\\
     \noindent\rule{\textwidth}{0.4pt}
   
   \texttt{Input:} $P\;$=\;\raisebox{0cm}{\includegraphics[scale=0.3]{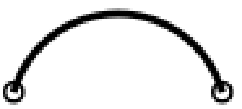}}
   $(d_l,d_r) \;\;$ 
   
  \texttt{initially} $Q_1=P$; \setlength{\parindent}{1.3cm} 
  
  \texttt{queue} $Q=(Q_1)$;
  
  \texttt{integer} $L=\text{length}(Q)$   (automatically modified by any alteration of $Q$);
  
  \texttt{vertex} $v=\odot$\;;
  
  \texttt{label}($v$)\;=\;label given to the chord of $Q_1$; 
  
  \texttt{diagrams} $D_l=D_r=\varnothing$;
  
  \texttt{tree} $Z=v$;\\

   \setlength{\parindent}{0cm} 
 Set $v$ as the root vertex of $Z$\\
  While $Q\neq\varnothing$ \{\\\setlength{\parindent}{1.3cm} 
  \begin{enumerate}
      \item Set $D_l=d_l(Q_1)$ and $D_r=d_r(Q_1)$;\\
  
      \item If $D_l=\varnothing= D_r$ then:

      - push $Q_1$ out of $Q$ (i.e. for all $1\leq k<L, Q_k\leftarrow Q_{k+1}$
      );    
      
      - Go to step (1) again.\\
      
      \item If $D_l= \varnothing$ and $D_r\neq\varnothing$  then:
      
      - Create $|C_\bullet(D_r)|$ children attached to $v$,
      and set their labels to be the same
      
      as the chords in $C_\bullet(D_r)$. Namely, let $\{w_1,\ldots,w_{|C_\bullet(D_r)|}\}$ be the children
      
      and set
      label($w_i$)=label($i^{\text{th}}$ \text{chord}) in the obvious meaning;
      
      - For each $i\in\{1,\ldots,|C_\bullet(D_r)|\}$ add $Q_{L+i}$ to the queue $Q$, where
      
      $Q_{L+i}$:=\;
  \raisebox{0cm}{\includegraphics[scale=0.3]{Figures/onechord.eps}} $\big(d_l(i^{\text{th}} \text{chord}),d_r(i^{\text{th}} \text{chord})\big)$, where the single chord 
   
   is standing for the $i^\text{th}$ chord in $C_\bullet(D_r)$ and the $d_l,d_r$ are the dangling
   
   diagrams of this chord in $D_r$;
   
   - Set $C_\bullet(D_r)$ as the $\mathcal{D}_{\leq2}$-structure over the children of $v$;
   
   - Push $Q_1$ out of $Q$; 
   
   - Set $v=$ vertex for the chord of $Q_1$ (where $Q_1$ has been updated);
   
   - Go to step (1);\\

   \item If $D_l\neq \varnothing$ and $D_r\neq\varnothing$  then:
   
  - Create $|C_\bullet(D_l)|+|C_\bullet(D_r)|$ children attached to $v$,
      and set their labels to
      
      be the same
      as the corresponding chords, as before;
      
      - For each $i\in\{1,\ldots,|C_\bullet(D_l)|+|C_\bullet(D_r)|\}$  add $Q_{L+i}$ to the queue $Q$, where
      
      $Q_{L+i}$:=\;
  \raisebox{0cm}{\includegraphics[scale=0.3]{Figures/onechord.eps}} $\big(d_l(i^{\text{th}} \text{chord}),d_r(i^{\text{th}} \text{chord})\big)$, where the single chord 
   
   is standing for the $i^\text{th}$ chord in $C_\bullet(D_l)$ if $1\leq i\leq|C_\bullet(D_l)|$ and for the
   
   $(i-|C_\bullet(D_l)|)^\text{th}$ chord in $C_\bullet(D_r)$ otherwise;
   
   - Set the concatenation $C_\bullet(D_l) C_\bullet(D_r)$ as the $\mathcal{D}_{\leq2}$-structure over the children 
   
   of $v$;
   
   - Push $Q_1$ out of $Q$; 
   
   - Set $v=$ vertex for the chord of $Q_1$;
   
   - Go to step (1);\\
   \item If $D_l\neq \varnothing$ and $D_r=\varnothing$  then:
   \begin{enumerate}
       \item[(a)] In case $C_\bullet(D_l)$ is a \textbf{single chord} $c$ then:
       
       - vertex $v$ \textbf{absorbs} another node with the label given to $c$. It is appropriate to think of a vertex here as some sort of stack comprising labelled nodes:\\
       
       \begin{center}
           \includegraphics[scale=0.8]{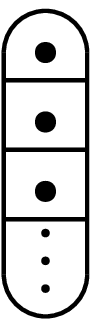}
      \end{center}
       - Add $Q_{L+1}$ to $Q$, where $Q_{L+1}$ consists of $c$ and its dangling diagrams, i.e. $d_l(Q_{L+1})=d_l(c)$ and $d_r(Q_{L+1})=d_r(c)$;  
       
       - Push $Q_1$ out of $Q$; 
   
       - Set $v=$ vertex for the chord of $Q_1$;
   
       - Go to step (1);\\
       
       \item[(b)] Otherwise if $C_\bullet(D_l)$ is \textbf{not a single chord} then:
       
       - Create $|C_\bullet(D_l)|$ children attached to $v$,
       and set their labels to
       be the same
      as the corresponding chords, as before;
      
      - For each $i\in\{1,\ldots,|C_\bullet(D_l)|\}$  add $Q_{L+i}$ to the queue $Q$, where
      
      $Q_{L+i}$:=\;
  \raisebox{0cm}{\includegraphics[scale=0.3]{Figures/onechord.eps}} $\big(d_l(i^{\text{th}} \text{chord}),d_r(i^{\text{th}} \text{chord})\big)$, where the single chord 
   is standing for the $i^\text{th}$ chord in $C_\bullet(D_l)$;

   - Set  $\Phi(C_\bullet(D_l))$ as the $\mathcal{D}_{\leq2}$-structure over the children 
   of $v$;
   
   - Push $Q_1$ out of $Q$; 
   
   - Set $v=$ vertex for the chord of $Q_1$;
   
   - Go to step (1);\;\;\; \}\\
    \end{enumerate}

  \end{enumerate}
   
 \texttt{Output:} $\Theta(P)=Z$.\\
 \noindent\rule{\textwidth}{0.4pt}\\
 
\setlength{\parindent}{0cm}
This algorithm uniquely generates the corresponding tree. Indeed, to see this it shall be enough to see that every branching from a vertex is uniquely translated into chord diagrams:

\begin{enumerate}
    \item If the $\mathcal{D}_{\leq2}$-structure over the children is the concatenation of two connected components, then we know simply that there were nonempty right and left dangling diagrams for the chord corresponding to the vertex. Further, the two connected components are, respectively, the root components of  the dangling diagrams. The order of components in the (rooted) $\mathcal{D}_{\leq2}$-structure dictates which component is for the left or right dangling diagram. 
    
    \item If the $\mathcal{D}_{\leq2}$-structure is just a connected chord diagram, then for the chord corresponding to the vertex only the right dangling diagram was nonempty. In particular, this connected structure is the root component for the right dangling diagram.
    %
    \item If the $\mathcal{D}_{\leq2}$-structure is an indecomposable chord diagram with exactly two connected components, then we learn that for the corresponding chord only the left dangling diagram was nonempty. The root component of which is determined by applying $\Phi^{-1}$ to the $\mathcal{D}_{\leq2}$-structure. This process is well-defined by virtue of $\Phi$ being a bijection.
    
    \item The only remaining case is when the vertex itself is a stack. This marks that, as in the previous case, only the left dangling diagram was nonempty for the chord corresponding to the vertex, and that, further, the root component for this dangling diagram was a single chord with the label given next in the stack. Uniqueness in this case is clear as the information is encoded into the tree in a way that does not interfere with the previous cases, hence no ambiguity arises.\\
\end{enumerate}

This outlines that the above algorithm is reversible, and hence establishes the desired bijection. To prove the second part of the theorem notice that any rooted chord diagram can be viewed as a (possibly empty) list of nonempty indecomposable chord diagrams. 
    \end{proof}
    
    \begin{exm}\label{exampleofZbijection}
    Let $P\in\mathcal{X\ast(D\ast D)}$ be given by $P=\;$ \raisebox{0cm}{\includegraphics[scale=0.3]{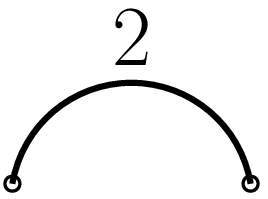}} $\big(d_l(P),d_r(P)\big)$, where

 $d_r(P)=\;$ \raisebox{-0.12cm}{\includegraphics[scale=0.7]{
 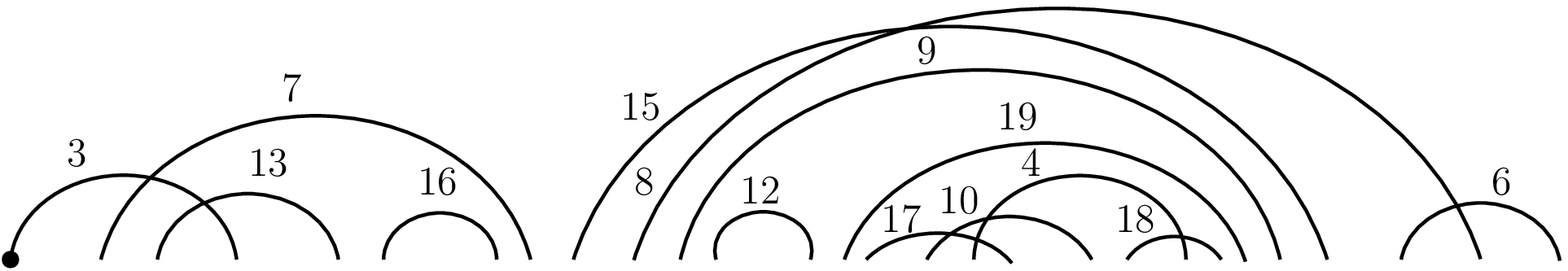}}

  and
    
  $d_l(P)=\;$ \raisebox{-0.12cm}{\includegraphics[scale=0.7]{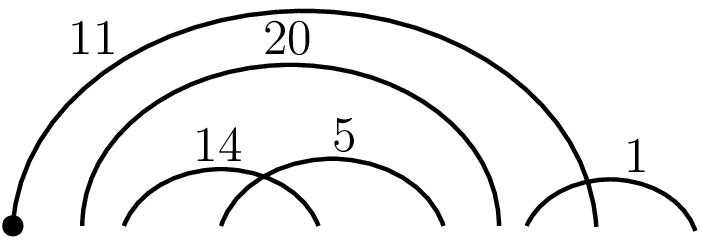}}\;.\\ 

The first iterations in the algorithm are going to be as follows:\\

[1] Initially:

$Q_1=P$,

$Q=(Q_1)$,

$L=1$,

$Z=$ \raisebox{-0.12cm}{\includegraphics[scale=0.7]{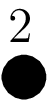}}\;.
 
\noindent\rule{\textwidth}{0.4pt}\\

[2]
$D_l\neq\varnothing$ and $D_r\neq\varnothing$, so we attach the children as of

$Z=\;$ \raisebox{-0cm}{\includegraphics[scale=0.65]{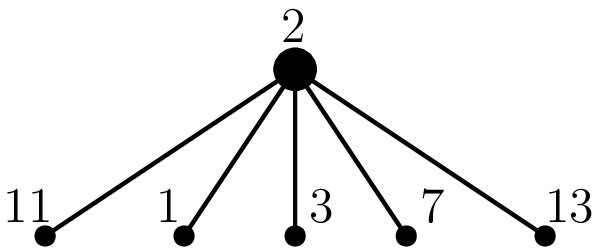}}
   \;with the $\mathcal{D}_{\leq2}$-structure
      \raisebox{-0cm}{\includegraphics[scale=0.6687]{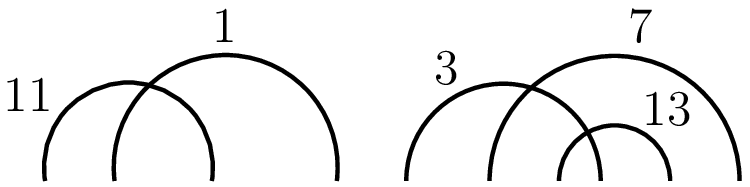}}.
 The updated queue becomes (after including the new entries and pushing the old $Q_1$ out of $Q$):

\[ Q=\bigg(Q_1= \raisebox{-2.6cm}{\includegraphics[scale=0.75]{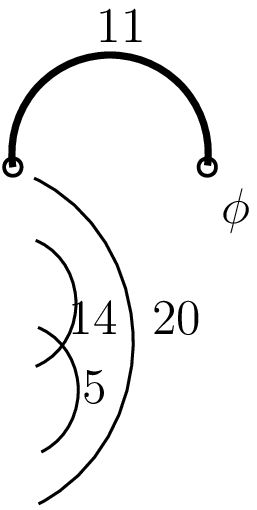}}\;,\;
 \raisebox{-0.5cm}{\includegraphics[scale=0.75]{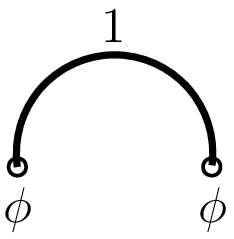}}\;,\;
 \raisebox{-0.5cm}{\includegraphics[scale=0.75]{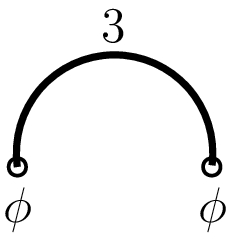}}\;,\;
 \raisebox{-0.5cm}{\includegraphics[scale=0.75]{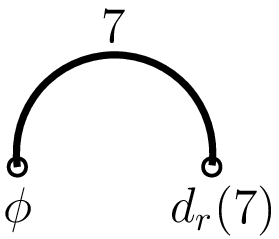}}\;,\;
 \raisebox{-1.65cm}{\includegraphics[scale=0.75]{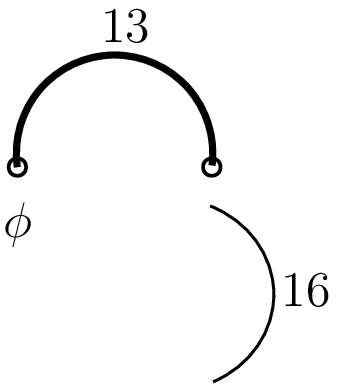}} \bigg)\]

and then the vertex $v$ is set to be $11$.

\noindent\rule{\textwidth}{0.4pt}\\

[3] In this iteration we find that $D_l\neq\varnothing$, whereas $D_r=\varnothing$, moreover, $C_{\bullet}(D_l)$ is the single chord labelled $`20$'. Thus, following the algorithm, one more node is appended to the vertex $v$ which, before this moment, only contained the node labelled $11$. Thus, vertex $v$ is now given by 
\raisebox{-0.12cm}{\includegraphics[scale=0.5]{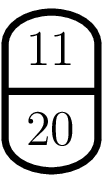}}. Then we add the entry 
$Q_6=\;\raisebox{-0.1cm}{\includegraphics[scale=0.42]{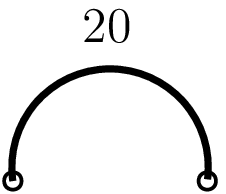}}\;\big(\raisebox{-0.1cm}{\includegraphics[scale=0.7]{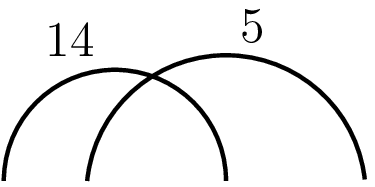}}\;,\varnothing\big)$
to the queue $Q$ (from the end); update $Q$ by pushing out $Q_1$; and set the vertex $v$ to be at chord `1', since it is the chord of the new $Q_1$.\\


Following the algorithm to the end we generate the tree $\Theta(P)$ to be as in Figure \ref{thetaofP} below, where the right column displays the $\mathcal{D}_{\leq2}$-structures pertinent to the children of each vertex (recall that vertices here are generally stacks of nodes). For clarity, structures are displayed level-wise.

 \begin{figure}[h]
     \centering
    \raisebox{0cm}{\includegraphics[scale=0.7]{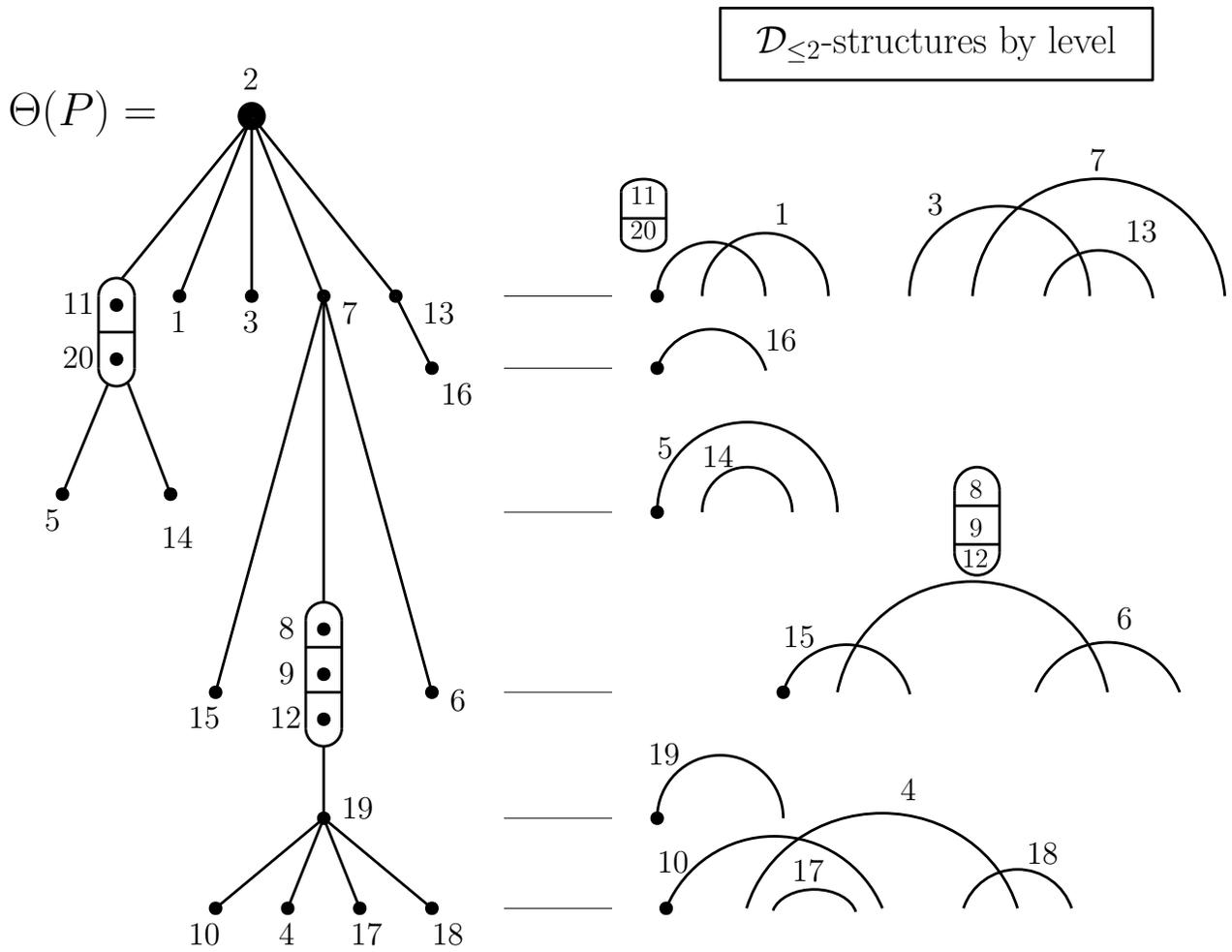}}
     \caption{$\Theta(P)$, with $\mathcal{D}_{\leq2}$-structures displayed by level on the right.}
     \label{thetaofP}
 \end{figure}

 \end{exm} 
 
 \begin{cor}\label{coro} Let $I_0(x)$ be the generating series for nonempty indecomposable chord diagrams as before, and set $B(x)=D_{\leq2}(x)+x$, where $D_{\leq2}(x)$ is the generating function for the class $\mathcal{D}_{\leq2}$. Then $$I_0(x)=\displaystyle\frac{x}{1-xB'(Z)}\;,$$ where $Z$ is the generating series for the class $\mathcal{Z}$ as before.
 
    \end{cor}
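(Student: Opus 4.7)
The plan is to reduce the corollary to a short calculation combining the functional equation $Z = xD^2$ coming from Theorem~\ref{mainth} with the classical chord-diagram identities of Lemma~\ref{cd}, after first making $B(x)$ combinatorially transparent.

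The first step is to identify $B(x) = (1+C(x))^2$. Decomposing a diagram in $\mathcal{D}_{\leq 2}$ according to whether it is empty, has one connected component (so lies in $\mathcal{C}$), or has two connected components, and splitting the two-component case into those whose components are concatenated (contributing $C^2$) versus those whose components are interleaved (so that the whole diagram is in $\mathcal{I}_2$, contributing $C-x$ by Lemma~\ref{bij}), yields $D_{\leq 2}(x) = 1 + 2C(x) + C(x)^2 - x$. Adding $x$ collapses this to $B(x) = (1+C(x))^2$, and hence $B'(x) = 2(1+C(x))\,C'(x)$.

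Next I bring in the tree side. Since any chord diagram is a sequence of nonempty indecomposables, $D = 1/(1-I_0)$, and so Theorem~\ref{mainth} reads $Z = xD^2$; Lemma~\ref{cd}(i) in turn gives $D(x) = 1 + C(xD(x)^2) = 1 + C(Z(x))$, i.e.\ $1 + C(Z) = D$. Differentiating $C(Z(x)) = D(x) - 1$ with respect to $x$ via the chain rule produces $C'(Z) = D'/Z'$, and substituting into $B'(Z) = 2(1+C(Z))C'(Z)$ gives $xB'(Z) = 2xDD'/Z'$. But $Z = xD^2$ also yields $Z' = D^2 + 2xDD'$, so $2xDD' = Z' - D^2$, whence $1 - xB'(Z) = D^2/Z'$ and
\[ \frac{x}{1 - xB'(Z)} \;=\; \frac{xZ'}{D^2} \;=\; x + \frac{2x^2 D'}{D} \;=\; x + \frac{2x^2 I_0'}{1-I_0}, \]
where the last equality uses $D = 1/(1-I_0)$.

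It remains to verify $I_0 = x + 2x^2 I_0'/(1-I_0)$. This is exactly what one obtains by substituting $D = 1/(1-I_0)$ into the relation $D = 1 + xD + 2x^2 D'$ of Lemma~\ref{cd}(ii) and clearing denominators. The main bookkeeping obstacle is getting $B'(Z)$ expressed in terms of $D$, $D'$, and $Z'$ cleanly via the chain rule; once that is in hand the entire identity reduces to Lemma~\ref{cd}(ii) in disguise.
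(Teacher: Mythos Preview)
Your proof is correct and lands on the same intermediate identity $\dfrac{x}{1-xB'(Z)} = x + \dfrac{2x^2 I_0'}{1-I_0}$ as the paper, but the route is genuinely different. The paper never makes $B$ explicit at this stage: it works abstractly from the tree recursion $Z = xB(Z)$, differentiates once to obtain $xZ'/Z = 1/(1-xB'(Z))$, and then takes the logarithmic derivative of $Z = x/(1-I_0)^2$ from Theorem~\ref{mainth}. It closes by invoking Lemma~\ref{inde}, which is proved separately via a combinatorial decomposition of indecomposable diagrams. You instead front-load the identification $B = (1+C)^2$, feed it through $D = 1+C(Z)$ from Lemma~\ref{cd}(i) to compute $B'(Z)$ explicitly as $2DD'/Z'$, and at the end recover the indecomposable relation purely algebraically from Lemma~\ref{cd}(ii), so you never need Lemma~\ref{inde} as a standalone result. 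Your approach is more self-contained (the indecomposable identity is derived rather than quoted), while the paper's keeps $B$ opaque here and saves the identification $B = (1+C)^2$ for the punchline in the next corollary and proposition.
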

 Before proving Corollary \ref{coro} we will prove a decomposition of indecomposable diagrams:
 
 \begin{lem}\label{inde}
   The generating series $I_0$ for nonempty indecomposable chord diagrams satisfies the relation \[I_0(x)=x+\displaystyle\frac{2x^2I_0'(x)}{1-I_0(x)}\;.\label{ind rel}\]
 \end{lem}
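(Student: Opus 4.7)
The plan is to derive the relation algebraically by linking $I_0$ with the full chord-diagram series $D$ via the standard sequence-of-indecomposables decomposition, and then invoking Lemma \ref{cd}(ii).

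First I would observe the combinatorial identity
\[
D(x) \;=\; \frac{1}{1-I_0(x)},
\]
which comes from the fact that every rooted chord diagram decomposes uniquely, under linear concatenation, as a (possibly empty) sequence of nonempty indecomposable chord diagrams — this is essentially the content of Definition \ref{indecompo}. Equivalently, $I_0 = 1 - 1/D = (D-1)/D$.

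Next I would feed in Lemma \ref{cd}(ii), namely $D = 1 + xD + 2x^2 D'$. Rearranging gives $D - 1 = xD + 2x^2 D'$ and dividing through by $D$ yields
\[
\frac{D-1}{D} \;=\; x + \frac{2x^2 D'}{D}.
\]
The left-hand side is $I_0$. To convert the right-hand side into the desired form, I would differentiate $I_0 = 1 - 1/D$ to get $I_0' = D'/D^2$, so that $D'/D = D \cdot I_0' = I_0'/(1-I_0)$. Substituting yields
\[
I_0(x) \;=\; x + \frac{2x^2 I_0'(x)}{1-I_0(x)},
\]
which is the claimed identity.

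There is no real obstacle here — the whole argument is a two-line manipulation once the sequence decomposition $D = 1/(1-I_0)$ is in place. The only thing worth being a little careful about is making sure that the sequence decomposition is stated correctly (it relies on the empty diagram being vacuously indecomposable but not counted by $I_0$, which matches Definition \ref{indecompo}). A purely combinatorial proof is certainly possible — one would classify nonempty indecomposable diagrams by whether the root chord is alone ($x$) or by where its right endpoint lands in the remainder — but the algebraic route above is cleaner and directly reuses the decompositions already established in Lemma \ref{cd}.
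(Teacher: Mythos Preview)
Your proof is correct, but it proceeds differently from the paper. The paper gives the direct combinatorial argument that you mention only as an aside at the end: remove the root chord from a nonempty indecomposable diagram; if the diagram was not the single chord, what remains is a nonempty sequence of nonempty indecomposable pieces, and indecomposability of the original forces the right endpoint of the removed root to have landed in one of the $2m$ intervals of the \emph{last} piece. This yields $I_0 = x + x\cdot \dfrac{1}{1-I_0}\cdot 2xI_0'$ immediately.

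Your route instead pulls the identity back through $D=1/(1-I_0)$ to the ODE for $D$ in Lemma~\ref{cd}(ii). This is entirely valid and arguably more economical, since it reuses a result already on the shelf; the paper's approach has the advantage of being self-contained and giving a bijective meaning to each term without appealing to $D$ at all. The two arguments are of course closely related under the hood---Lemma~\ref{cd}(ii) is proved by essentially the same root-removal idea applied to arbitrary diagrams---so the difference is more one of packaging than of substance.
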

 \begin{proof}
     Given a nonempty indecomposable chord diagram we can argue as follows. If the diagram is not a single chord, then removing the root chord generally leaves us with a list of nonempty indecomposable chord diagrams. Moreover, the last diagram in this list carries all the information about the removed root chord, encoded as a marked interval that used to carry the right end of the root chord. Recall that the intervals are the spaces to the right of every chord end in the linear representation, including the last space to the right of the diagram.  Thus we have $2m$ intervals in a diagram with $m$ chords. The relation in the lemma is exactly the translation of this decomposition into the world of generating series. 
 \end{proof}

 \begin{exm}
 In the following diagram, the diagram is decomposed into: the root, $D_1$, $D_2$, and ($D_3$, interval 4), where, among the 8 intervals in $D_3$, the root originally landed in interval 4 (marked by a red dotted line).  
 
\includegraphics[scale=0.85]{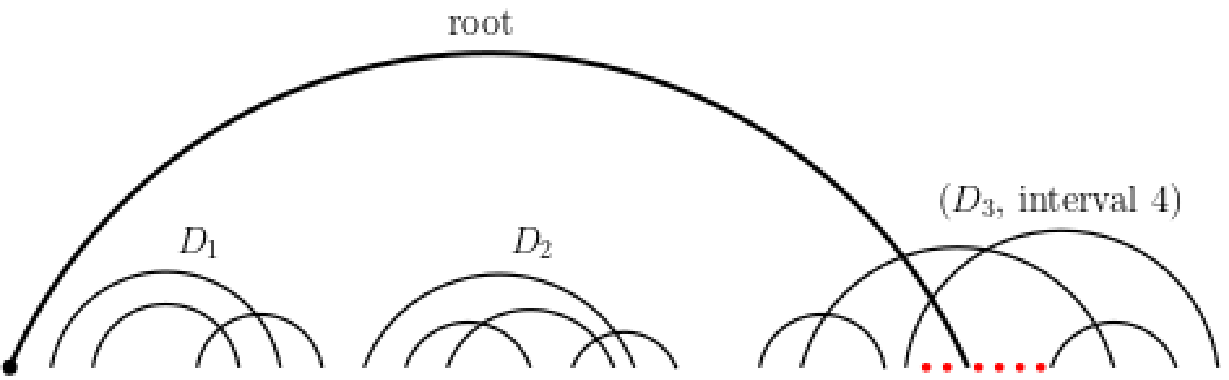}\\
  
 \end{exm}

    \begin{proof}[Proof of Corollary \ref{coro}:]
    First of all, notice that by the definition of the class $\mathcal{Z}$, the generating series $Z$ satisfies the recursion \[Z(x)=xD_{\leq2}(Z)+xZ=x B(Z),\] where $B(t)=D_{\leq2}(t)+t$. Thus $Z'(x)=B(Z)+xB'(Z)Z'=\displaystyle\frac{Z}{x}+xB'(Z)Z',$ and hence \[Z=xZ'(1-xB'(Z)).\]By Theorem \ref{mainth}, we have that $Z=x\Big(\displaystyle\frac{1}{1-I_0}\Big)^2$. Taking the logarithmic derivative of both sides and making use of the above identities we get
    \begin{align*}
        1+2x\frac{d}{dx}\log\Big(\displaystyle\frac{1}{1-I_0}\Big)&=x\displaystyle\frac{d}{dx}\log Z= x\displaystyle\frac{Z'}{Z}
        =\displaystyle\frac{1}{1-xB'(Z)},
    \end{align*}
    
and hence $$\frac{1}{1-xB'(Z)}=1+\displaystyle\frac{2xI_0'}{(1-I_0)}\;.$$

Multiplying by $x$ we get that, by Lemma \ref{ind rel}, $\displaystyle\frac{x}{1-xB'(Z)}=x+\displaystyle\frac{2x^2I_0'}{(1-I_0)}=I_0\;,$  which completes the proof.
    \end{proof}

    \begin{cor}\label{endcoro} Let $A(X)$ be the generating series for the sequence \href{https://oeis.org/A088221}{A088221}. Then $$A(x)=D_{\leq2}(x)+x.$$\end{cor}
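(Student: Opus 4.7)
The plan is to extract the diagonal coefficients $[x^n] B(x)^n$, where $B(x) := D_{\leq 2}(x) + x$ as in Corollary~\ref{coro}, by combining Corollary~\ref{coro} with Lagrange--B\"urmann inversion applied to the functional equation $Z = xB(Z)$, and then to compare with the characterization $[x^n]A(x)^n = [x^{n+1}] I(x)$ of \href{https://oeis.org/A088221}{A088221} recalled earlier in the paper.

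First, I would rewrite $I_0 = x/(1 - xB'(Z))$ from Corollary~\ref{coro} in a form amenable to coefficient extraction. The identity $Z = xZ'(1 - xB'(Z))$ derived in that proof gives $1/(1 - xB'(Z)) = xZ'/Z$, so $I_0 = x^2 Z'/Z$. Writing $Z = x\hat Z$ with $\hat Z = B(Z)$ and $\hat Z(0) = B(0) = 1$, this becomes $I_0 = x + x^2(\log B(Z))'$, hence for every $n \geq 1$,
\[ [x^{n+1}] I_0(x) \;=\; n \, [x^n] \log B(Z). \]
Next I would apply Lagrange--B\"urmann to $Z = xB(Z)$ with the test function $H(w) = \log B(w)$, which is a bona fide power series since $B(0) = 1$, with derivative $H'(w) = B'(w)/B(w)$. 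This yields
\[ n[x^n]\log B(Z) \;=\; [w^{n-1}] \tfrac{B'(w)}{B(w)} B(w)^n \;=\; \tfrac{1}{n}[w^{n-1}] \tfrac{d}{dw}B(w)^n \;=\; [w^n] B(w)^n. \]
Combining, $[x^{n+1}] I_0(x) = [x^n] B(x)^n$ for all $n \geq 1$. Since the paper recalls that $[x^n] A(x)^n = [x^{n+1}] I(x) = [x^{n+1}] I_0(x)$ (the last equality because $I = 1 + I_0$), the series $A$ and $B$ have identical diagonal coefficients.

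To finish, I would invoke uniqueness: for any power series $F(x) = \sum f_k x^k$ with invertible constant term, expansion gives $[x^n] F(x)^n = n f_0^{n-1} f_n + P_n(f_0, \ldots, f_{n-1})$ for an explicit polynomial $P_n$, so $F$ is determined by $f_0$ together with the sequence $([x^n] F^n)_{n \geq 1}$. Both $A$ and $B$ have constant term $1$ (from the initial entries of \href{https://oeis.org/A088221}{A088221} and from $D_{\leq 2}(0) = 1$ respectively), hence $A(x) = B(x) = D_{\leq 2}(x) + x$. The only delicate point is justifying Lagrange inversion with a logarithmic test function, but this is harmless precisely because $B(0) = 1$ keeps both $\log B(w)$ and $B'(w)/B(w)$ inside the ring of formal power series.
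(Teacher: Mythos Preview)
Your argument is correct and follows essentially the same route as the paper: both extract $[x^n]B(x)^n$ from Corollary~\ref{coro} via Lagrange inversion applied to $Z=xB(Z)$ and then match against the defining relation $[x^n]A(x)^n=[x^{n+1}]I_0(x)$. The paper invokes the second form of Lagrange inversion, $[x^n]\tfrac{1}{1-xB'(Z)}=[x^n]B(x)^n$, in one line, whereas you reach the same identity through $I_0=x+x^2(\log B(Z))'$ and the B\"urmann form with $H=\log B$; you also make explicit the uniqueness step (that equal diagonals $[x^n]F^n$ together with $F(0)=1$ force equality of series), which the paper leaves implicit.
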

\begin{proof}
By Lagrange inversion\footnote{Lagrange inversion, also known as the Lagrange implicit function theorem, can be found in many standard enumeration references.  See for example \cite{goulden}.  Briefly, in the form that we will use it, Lagrange inversion says that if $Z(x) = xB(Z(x))$ then $[x^n]Z(x) = \frac{1}{n}[x^{n-1}]B(x)^n$.}, we know that 
\[[x^n]\displaystyle\frac{1}{1-xB'(Z)}=[x^n]B^n(x).\]
 Now, by the definition of the sequence \href{https://oeis.org/A088221}{A088221}, we know that it is the sequence for which $[x^n]A^n(x)=[x^{n+1}]I_0(x)$, where $A(x)$ is assumed to be the generating series for the sequence \href{https://oeis.org/A088221}{A088221}. This gives that $[x^n]A^n(x)=[x^{n+1}]I_0(x)=
 [x^{n+1}]\displaystyle\frac{x}{1-xB'(Z)}=[x^n]\displaystyle\frac{1}{1-xB'(Z)}=[x^n]B^n(x)$, and so $A(x)=B(x)=D_{\leq2}(x)+x.$
\end{proof}

\begin{prop}
The $n^\text{th}$entry in the sequence \href{https://oeis.org/A088221}{A088221} counts the number of pairs $(C_1,C_2)$ of connected chord diagrams (allowing empty diagrams) with total number of chords being $n$.\end{prop}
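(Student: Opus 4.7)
The plan is to combine Corollary~\ref{endcoro} with the standard concatenation decomposition of a chord diagram into its indecomposable blocks. By Corollary~\ref{endcoro} it suffices to prove the identity
$$D_{\leq 2}(x) + x \;=\; (1 + C(x))^2,$$
since the right hand side is manifestly the ordinary generating function for ordered pairs $(C_1,C_2)$ of connected chord diagrams when empty diagrams are allowed: $1+C(x)$ counts connected chord diagrams (empty or not), and the square produces ordered pairs.

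To establish the identity I would stratify $\mathcal{D}_{\leq 2}$ according to the exact number of connected components. The zero-component stratum is the empty diagram, contributing $1$. The one-component stratum is precisely the class of nonempty connected chord diagrams and contributes $C(x)$. For the two-component stratum I would use the (unique) decomposition of any chord diagram as a concatenation of nonempty indecomposable blocks. Since each such block has at least one connected component, a diagram with exactly two components must fall into one of two cases: either it is a single indecomposable block with two connected components, i.e.\ an element of $\mathcal{I}_2$, or it is the concatenation of two indecomposable blocks each with one component, i.e.\ the concatenation of two nonempty connected diagrams. The first case is controlled by Lemma~\ref{bij}, which gives generating function $C(x)-x$; the second case contributes $C(x)^2$.

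Summing these pieces yields
$$D_{\leq 2}(x) \;=\; 1 + C(x) + \bigl(C(x) - x\bigr) + C(x)^2 \;=\; (1+C(x))^2 - x,$$
so that $D_{\leq 2}(x) + x = (1+C(x))^2$, as required. Comparing coefficients of $x^n$ and appealing to Corollary~\ref{endcoro} immediately translates this algebraic identity into the combinatorial statement of the proposition. The only non-bookkeeping input is the two-component stratum, where Lemma~\ref{bij} does the essential work of converting $\mathcal{I}_2$ into $\mathcal{C}^\ast$; everything else is a direct application of the indecomposable-block factorization, so I do not anticipate any serious obstacle beyond what is already packaged in Lemma~\ref{bij} and Corollary~\ref{endcoro}.
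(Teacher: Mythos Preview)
Your proof is correct and follows essentially the same route as the paper: both stratify $\mathcal{D}_{\leq 2}$ into the empty diagram, connected diagrams, concatenations of two connected diagrams, and $\mathcal{I}_2$, invoke Lemma~\ref{bij} on the last piece, and conclude $D_{\leq 2}(x)+x=(1+C(x))^2$. Your phrasing via the indecomposable-block factorization makes the exhaustiveness of the two-component case a bit more explicit, but the argument is the same.
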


\begin{proof}
Indeed, any chord diagram with at most two connected components is either: 
(1) empty, (2) connected, (3) concatenation of two connected diagrams, (4) or is indecomposable with exactly two connected components.
    By using Lemma \ref{bij} for the last case we thus get \[A(x)=D_{\leq2}(x)+x=1+C(x)+C^2(x)+C(x)-x+x=(C(x)+1)^2,\] and the result is established.
\end{proof}

\section{Discussion}\label{sec discussion}

The bijections above are interesting in their own right, but for us, they are most interesting as a small first step towards giving a combinatorial interpretation of the analysis of Borinsky and Dunne \cite{Geraldmichi}, and ultimately more generally of such resurgence setups.  To this end, let us further discuss the situation of Borinsky and Dunne.

They work with the transseries Ansatz (equation 20 of \cite{Geraldmichi})
\[
    C(x) = \sum_{k=0}^\infty \sigma^k C^{(k)}(x)
\]
where $\sigma$ is an instanton parameter (as it will turn out it will come along with the same power of $e^{-1/2x}/\sqrt{x}$, giving a transseries in the simplified form mentioned in the introduction.  In the expansion of Borinsky and Dunne, $C^{(0)}(x)$ is what we have called $C(x)$ in this paper, the generating series for connected chord diagrams.  $C^{(0)}(x)$ is also a scaled version of the perturbative solution to the Dyson-Schwinger equation in Yukawa theory that Broadhurst and Kreimer solved in \cite{bkerfc}.  The $C(x)$ of Borinsky and Dunne is required to satisfy the same differential equation as the generating series for the connected chord diagrams, namely Lemma~\ref{cd} (iii).  Taking the coefficient of $\sigma^0$ we see that $C^{(0)}(x)$ must satisfy the same differential equation and so must be exactly the generating series for connected chord diagrams.

They next derive, using the differential equation, the expression for $C^{(1)}$ whose exponential part we have been studying in this paper.  
\begin{align*}
    C^{(1)}(x) & = \left(\frac{e^{-1/2x}}{\sqrt{x}}\right)\left(\mathcal{A}^2_{\frac{1}{2}}C^{(0)}\right)(x) \\
    & = 
    \left(\frac{e^{-1/2x}}{\sqrt{x}}\right) \left(\frac{x}{\sqrt{2\pi}C^{(0)}(x)}\right)e^{-\frac{(C^{(0)}(x)(C^{(0)}(x)+2)}{2x}} \\
    & = \xi\frac{1}{e\;\sqrt{2\pi}}
   \Bigg(1-\frac{5}{2}x-\frac{43}{8}x^2-\frac{579}{16}x^3-\frac{44477}{128}x^4-\frac{5326191}{1280}x^5\cdots\Bigg),
\end{align*}
where $\xi = e^{-1/2x}/\sqrt{x}$.

In fact, their methods make it possible to give expressions for all the $C^{(j)}(x)$ in terms of $C^{(0)}(x)$ and derivatives of an explicit bivariate function they call $f(x,y)$ (see equation 38 of \cite{Geraldmichi}).  Expanding the first few of these expressions one obtains
\begin{align*}
    C^{(2)}(x) & = \xi^2\frac{1}{2\pi e^2} \left(-\frac{1}{x} + 5 + \frac{11}{2}x + \frac{97}{2}x^2 + \frac{4173}{8}x^3 + \frac{268051}{40}x^4 + \cdots  \right) \\
    C^{(3)}(x) & = \xi^3\frac{1}{(2\pi)^{3/2} e^3}\left(\frac{3}{2x^2} -\frac{47}{4x} + \frac{67}{16} -\frac{2157}{32}x -\frac{211199}{256}x^2 -\frac{29245909}{2560}x^3 + \cdots \right)
\end{align*}


Borinsky and Dunne also consider expanding $C(x) = \sum_{k=0}^\infty \sigma^k C^{(k)}(x)$ in terms of $x$ with the coefficients series in $\sigma\xi$.  They write (equation 39 of \cite{Geraldmichi})
\[
    C(x) = x\sum_{n=0}^\infty x^n F_n(\rho)
\]
where $\rho = \sigma\xi/x$.  Then they find that the same differential equation gives that $F_0(\rho) = 1 + W(\rho)$ where $W$ is the Lambert $W$ function.  The Lambert $W$ function is defined by the identity $W(\rho)e^{W(\rho)} = \rho$.  From an enumerative perspective it is a standard fact that the Lambert $W$ function gives the exponential generating series for nonempty labelled rooted trees.  Specifically letting $R(z)$ be the exponential generating series of labelled rooted trees we have $R(z) = ze^{R(z)}$ since a nonempty rooted tree is a root and a forest of subtrees.  Rearranging we have $R(z)e^{-R(z)} = z$ and so $R(z) = -W(-z)$.  This means that $F_0$ is the exponential generating series of labelled rooted trees, allowing an empty tree, and with a sign of $(-1)^{|t|-1}$ for a tree $t$ with $|t|$ vertices.  That is,  $F_0(\rho) = 1 - R(-\rho)$.

Borinsky and Dunne were well aware of this connection to rooted trees \cite{BDpersonal}.  It makes the question of an overall combinatorial interpretation all the more tantalizing, for what we have is a square
\[
\begin{array}{c|ccccc}
 & F_0(\rho) & F_1(\rho) & F_2(\rho) & F_3(\rho) & \cdots \\
 \hline
 C^{(0)}(x)\,\frac{1}{x} & 1 & 1 & 4 & 27 & \cdots \\
 C^{(1)}(x)\,\frac{e \sqrt{2\pi}}{\xi} & 1 & -\frac{5}{2} &  -\frac{43}{8}& -\frac{579}{16} & \cdots \\
 C^{(2)}(x)\,\frac{xe^2 2\pi}{\xi^2} & -1 & 5& \frac{11}{2}&  \frac{97}{2} & \cdots \\
 C^{(3)}(x)\,\frac{x^2e^3(2\pi)^{3/2}}{\xi^2} & \frac{3}{2} & -\frac{47}{4} & \frac{67}{16}& -\frac{2157}{32} & \cdots \\
 \vdots & \vdots & \vdots & \vdots & \vdots & \ddots
\end{array}
\]
where the first row and first column both have classical combinatorial interpretations: the ordinary generating series of rooted connected chord diagrams and the exponential generating series of labelled rooted trees, respectively.  For the second row, we have taken a small step, giving a combinatorial interpretation for part of what goes into that series.  

We would like to understand this whole square combinatorially, namely to find some combinatorial objects counted with respect to two parameters, where setting one parameter to $0$ gives rooted trees and setting the other parameter to $0$ gives connected chord diagrams, and a bivariate count gives the square above.  One reason to doubt the possibility of this is the signs, which do not follow an obvious pattern.  The principal difficulty we encountered also comes from signs, though in this case from internal signs.  Borinsky and Dunne found equations for all these series in terms of $C^{(0)}$ and the Lambert $W$.  If these equations involved only positive coefficients then by standard methods all the operations would immediately be interpretable combinatorially and the problem would be trivial.  However, the expressions do involve signs, and so giving an interpretation becomes a challenging task of giving sufficiently nice interpretations of the pieces that cancellations coming from sign differences can be understood as explicit set subtractions. In general there is no reason to expect this to be possible, but we remain hopeful that in structured circumstances like this one, with sufficient cleverness, a fully combinatorial explanation will be possible.

\bibliographystyle{plain}
\bibliography{references.bib}

\end{document}